\newcommand{\Irr}{{\rm Irr}}
\newcommand{\wt}{{\rm wt}}
\newcommand{\base}{{\rm base}}
\newcommand{\rank}{{\rm rank}}
\newcommand{\cf}{{\rm Cf}}
\newcommand{\be}{{\bf e}}
\newcommand{\bu}{{\bf u}}
\newcommand{\bx}{\mbox{$\mathbf{x}$}}
\newcommand{\by}{{\bf y}}
\newcommand{\C}{\mbox{$\mathbb C$}}
\newcommand{\Z}{\mbox{$\mathbb Z$}}
\newcommand{\inv}{\ensuremath{^{-1}}}
\newcommand\inner[1]{\langle \, #1  \, \rangle}
\newcommand{\cart}[3]{\ensuremath{{#1}_{#2} \times \cdots \times {#1}_{#3}}}
\newcommand{\pperp}{\ensuremath{{\rm \perp}\kern-.60em {\rm \perp} }}
\newcommand{\npperp}{\ensuremath{{\rm \perp}\kern-.60em {\rm \not\perp} }}
\newcommand\set[1]{\{1, \ldots, #1 \}}
\newcommand{\chihat}{\ensuremath{\hat{\chi}}}
\newcommand{\gbar}{\ensuremath{\bar{g}}}
\newcommand{\ybar}{\ensuremath{\bar{y}}}
\newtheorem{theorem}{Theorem}[section]
\newtheorem{lemma}[theorem]{Lemma}
\theoremstyle{definition}  
\newtheorem{example}[theorem]{Example}
\newtheorem{remark}[theorem]{Remark}
\newtheorem{definition}[theorem]{Definition}
\begin{document}
\author{Jay H. Beder
\\Department of Mathematical Sciences\\University of Wisconsin-Milwaukee\\
P.O. Box 413\\Milwaukee, WI 53201-0413\\
{\tt beder@uwm.edu}\\  \\Jesse S. Beder
\\Department of Mathematics\\University of Illinois at Urbana-Champaign\\
1409 W. Green Street \\Urbana, IL 61801\\
{\tt beder@math.uiuc.edu}\\  }

\title{Generalized wordlength patterns and strength}
\date{}

\maketitle

\vspace{-1cm}

\begin{abstract}
Xu and Wu (2001) defined the \emph{generalized wordlength pattern} $(A_1, \ldots, A_k)$ of an arbitrary fractional factorial design (or orthogonal array) on $k$ factors.  They gave a coding-theoretic
proof of the property that the design has strength $t$ if and only if $A_1 = \cdots = A_t = 0$.  The quantities $A_i$ are defined in terms of characters of cyclic groups, and so one might seek a  direct character-theoretic proof of this result.  We give such a proof, in which the specific group structure (such as cyclicity) plays essentially no role.  Nonabelian groups can be used if the counting function of the design satisfies one assumption, as illustrated by a couple of examples. 
\end{abstract}

{\footnotesize {\bf Key words.} Fractional factorial design; group
character; Hamming weight; multiset; orthogonal array; strength }

{\footnotesize {\bf AMS(MOS) subject classification.}
Primary: 62K15; 
Secondary:
05B15, 
20C15, 
62K05 
}

\section{Introduction}\label{intro-sec}
A \emph{fractional factorial design} is a multisubset $D$ of a
finite Cartesian product $G = \cart{G}{1}{k}$, that is, a set of
elements of $G$, the element \bx\ possibly repeated with some
multiplicity $O(\bx)$.
We will say that $D$ is \emph{based on} $G$, and refer to $O$ as the \emph{counting}
or \emph{multiplicity function}\footnote{It is called the \emph{indicator function} of $D$ by a number of authors -- for example, in \cite{ChengYe04}.} of $D$.  In statistical terminology, the
set $G_i$ indexes the levels of the $i$th factor in an experiment, and $G$ is the set of treatment combinations. The treatment combinations used in the design are referred to as \emph{runs}, and the
number of runs in the design, counting multiplicities, is
  \begin{equation} \label{|D|-eq} |D| = \sum_{\bx \in G} O(\bx).
  \end{equation}

Xu and Wu \cite{XuWu01} associated to a design $D$ a $k$-tuple
$(A_1(D), \ldots, A_k(D))$, called its \emph{generalized wordlength
pattern}, defined as follows.  If $G_i$ has $s_i$ elements, we take $G_i = \Z_{s_i}$, the additive cyclic group of integers modulo $s_i$.  This makes $G$ an abelian group.  To each $u \in \Z_s$ we associate a complex-valued function $\chi_u$ on $\Z_s$ such that
\begin{equation} \label{cyclic_char-eq}\chi_u(x) = \xi^{ux}, \end{equation}
where $\xi$ is a primitive $s$th root of unity (say $\xi = e^{2\pi
i/s}$).  For $\bu = (u_1, \ldots, u_k)$ and $\bx = (x_1, \ldots,
x_k) \in G$, we let
  \begin{equation} \label{char_prod-eq} \chi_{\bu}(\bx) = \prod_i \chi_{u_i}(x_i),
  \end{equation}
and define the \emph{$J$-characteristics}\footnote{When $s_1 = \cdots = s_k = 2$, the quantities $\chi_{g}(D)$ reduce to the \emph{$J$-characteristics} of Deng and Tang \cite{DengTang99}.  We are following Ai and Zhang \cite{AiZhang04} in using the same term for these quantities in the general case.} of the design to be the quantities
  \begin{equation} \label{J-char-eq} 
  \chi_{\bu}(D) =
  \sum_{\bx \in G} O(\bx) \overline{\chi_{\bu}(\bx)},
  \end{equation}
the bar denoting the complex conjugate.  This formula departs superficially from that given in~\cite{XuWu01}.  The introduction of the conjugate does not change the value of $\chi_{\bu}(D)$ since the choice of $\xi$ in (\ref{cyclic_char-eq}) is arbitrary and may be replaced by $\xi = e^{-2\pi i/s}$.  The factor $O(\bx)$ makes it explicit that each summand is repeated according to its multiplicity.

Finally, the ``generalized wordlengths" are given by
  \begin{equation} \label{genwordlength}
  A_j(D) = N^{-2} \sum_{\wt(\bu) = j} |\chi_{\bu}(D)|^2 \qquad \mbox{for} \; j =
  1, \ldots, k, \end{equation}
where $N = |D|$ is defined as in (\ref{|D|-eq}) and $\wt(\bu)$ is the
\emph{Hamming weight} of \bu, that is, the number of non-zero components of \bu. For the statistical meaning of the generalized wordlength pattern, the reader is referred to
\cite{XuWu01}.

The design $D$ may also be viewed as an orthogonal array,
particularly if its runs are displayed in matrix form, say as
columns of a $k \times N$ matrix.  Xu and Wu \cite[Theorem 4(ii)]{XuWu01} use a coding-theoretic
result to show that
$A_1(D) = \cdots = A_t(D) = 0$
iff $D$ has strength $t$.  They note
in passing that the functions (\ref{cyclic_char-eq}) and
(\ref{char_prod-eq}) are group characters, which might lead us to expect a
character-theoretic proof of this result. Providing such a proof is
the purpose of this paper.

Using a suggestive idea from \cite{Bierbrauer95},
we first reexpress the numbers $A_j(D)$ in terms of certain Fourier
coefficients.

The functions $\chi_u$ in (\ref{cyclic_char-eq}) are the \emph{irreducible characters} of the
group $\Z_s$, and so the functions $\chi_{\bu}$ are the irreducible characters of $G$.  They form
an orthonormal basis of the set of all functions from $G$ to \C\ under the inner product
  \begin{equation} \label{inner-eq} \inner{\phi,\psi}
  = \frac{1}{|G|}\sum_{\bx \in G} \phi(\bx)\overline{\psi(\bx)}.
  \end{equation}
If we express $O$ in this basis as
  \[ O = \sum_{\bu \in G} \mu_{\bu} \chi_{\bu}, \]
then its Fourier coefficients $\mu_{\bu}$ satisfy
    \[ \mu_{\bu} = \inner{O,\chi_{\bu}}
  = \frac{1}{|G|} \sum_{\bx \in G} O(\bx) \overline{\chi_{\bu}(\bx)}
  = \frac{1}{|G|} \chi_{\bu}(D), \]
so that the generalized wordlengths (\ref{genwordlength}) are given
by
  \[ A_j(D) = N^{-2} \sum_{\wt(\bu) = j} |\chi_{\bu}(D)|^2
  =  \frac{|G|^2}{N^2}\sum_{\wt(u)=j} |\mu_{\bu}|^2. \]
To establish our claim, we need to show that $D$ has strength $t$
iff $\mu_{\bu} = 0$ for all \bu\ such that $1 \le \wt(\bu) \le t$.

It turns out that this result does not depend on the fact that the
groups $G_i$ are cyclic, or even abelian, although in the nonabelian
case we will need to recast the concept of weight and to impose one restriction on $O$.  We recast the main result in Section~\ref{restatement-sec}, and give the proof in Section~\ref{proof-sec}.  Background on character theory and on strength is given in Sections~\ref{character-sec} and \ref{strength-sec}.  We conclude with two examples illustrating the restriction on $O$ in the nonabelian case.

There are many excellent expositions of character theory, and we will sometimes mention known results without citation. We will often refer to \cite{Isaacs76}; other texts include
\cite{Ledermann87} and~\cite{Serre77}. \vspace{1cm}

\textbf{Notation and terminology}. As already indicated, the complex conjugate of $z$ will be denoted by $\bar{z}$.  We denote the complex numbers by \C, the integers modulo $s$ by $\Z_s$, the cardinality of a set $E$ by $|E|$, and vectors ($k$-tuples) by boldface.  The set of complex-valued functions on $G$ will be written $\C^G$.

All groups are finite. The identity element of a group will generally be denoted by $e$.

When $G = \cart{G}{1}{k}$ is a direct product of groups, the \emph{Hamming weight} $\wt(\bu)$ of an element $\bu \in G$ will be defined as the number of nonidentity components of \bu.  Here we have modified the usual definition of Hamming weight as $G_i$ may have no zero symbol. Each $G_i$ may be identified with a subgroup of $G$, namely the subgroup $e_1 \times \cdots e_{i-1} \times G_i \times e_{i+1} \times \cdots \times e_k$ where $e_j$ is the identity of $G_j$.  A similar identification holds for $\cart{G}{{i_1}}{{i_m}}$ where $1 \le i_1 < \cdots < i_m \le k$.  For such subgroups it will be useful to introduce the following terminology.

\begin{definition} \label{factorial-def} If $H = \cart{G}{{i_1}}{{i_m}}$, we call $H$ a \emph{factorial subgroup} of $G$.  The number $m$ will be called the \emph{rank} of $H$.  The \emph{factorial complement} of $H$ in $G$ is $\prod_{i \notin I} G_i$, where $I = \{i_1, \ldots, i_m \}$. \end{definition} 

\section{Characters} \label{character-sec}
We will deal only with complex-valued characters.  We refer the reader to a treatment of character theory for more detail, and simply quote the results that we will need.

The set of characters on the group $G$ is closed under pointwise addition, and contains a finite set $\Irr(G)$ that generates it in the sense that every character on $G$ is a unique linear combination of characters in $\Irr(G)$ with nonnegative integer coefficients.  The characters in $\Irr(G)$ are called \emph{irreducible}.  Among them is the \emph{principal character} $\chi \equiv 1$.  The irreducible characters of the cyclic group $\Z_s$ are given by (\ref{cyclic_char-eq}), while for an abelian group $G$ they are the homomorphisms from $G$ to the multiplicative group $\C^*$ \cite[Corollary 2.6]{Isaacs76}.

%

If $G = \cart{G}{1}{k}$ and $\chi_i$ is a character on $G_i$, then
  \[ \chi(\bx) = \prod_i \chi_i(x_i) \qquad (\bx = (x_1, \ldots,x_m) \in G)   \]
defines a character on $G$, and $\chi \in \Irr(G)$ iff $\chi_i \in \Irr(G_i)$ for all $i$ \cite[Theorem~4.21]{Isaacs76}.

\begin{definition}  For $\chi$ a character of $G$, $\ker(\chi) = \{g \in G: \chi(g) = \chi(e) \}$, where $e$ is the identity of $G$.
\end{definition}
One can show\footnote{A representation of $G$ is a homomorphism, and $\ker(\chi)$ is the kernel of the representation affording $\chi$. When $G$ is abelian and $\chi$ is irreducible, $\ker(\chi)$ is the kernel of the homomorphism $\chi$.} that $\ker(\chi)$ is a normal subgroup of $G$.  The number $\chi(e)$ is a positive integer, called the \emph{degree} of $\chi$.

A character on $G$ is a \emph{class function}, that is, a function that is constant on the conjugacy
classes of $G$.  Let
  \[\cf(G)= \mbox{the set of class functions from $G$ to \C}. \]
This is clearly a vector space over \C, in which the irreducible characters play a special role (see, e.g., \cite[Theorem 2.8 and Corollary 2.14]{Isaacs76}):
\begin{theorem} \label{orthonormal-thm} Under the inner product (\ref{inner-eq}), $\Irr(G)$ is an orthonormal basis of $\cf(G)$. In particular, if $f \in \cf(G)$ then $f$ has a unique orthonormal expansion \begin{align*}
   f &= \sum_{\chi \in \Irr(G)} \mu_{\chi} \chi,
\intertext{where the Fourier coefficients are given by}
   \mu_{\chi} &= \inner{f,\chi}.  \end{align*}
\end{theorem}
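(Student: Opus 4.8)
The plan is to establish two separate facts and then combine them: that the members of $\Irr(G)$ are orthonormal under the inner product (\ref{inner-eq}), and that they span $\cf(G)$. Together these say that $\Irr(G)$ is an orthonormal basis, and the ``in particular'' clause is then pure linear algebra: pairing $f \in \cf(G)$ with each basis element and using orthonormality gives $\mu_\chi = \inner{f,\chi}$, and uniqueness follows because an orthonormal set is linearly independent.

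For orthonormality I would return to the representation-theoretic meaning of a character. Each $\chi \in \Irr(G)$ is the trace of an irreducible representation $\rho$, so given two such representations $\rho,\sigma$ on spaces $V,W$ and an arbitrary linear map $T\colon V \to W$, I would form the averaged map $T^0 = \frac{1}{|G|}\sum_{g \in G}\sigma(g)\,T\,\rho(g)\inv$, which intertwines the two representations. Schur's lemma then forces $T^0 = 0$ when $\rho \not\cong \sigma$ and $T^0$ to be scalar (over \C) when $\rho = \sigma$. Letting $T$ range over the matrix units and reading off entries yields the Schur orthogonality relations for matrix coefficients; taking traces collapses these to $\inner{\chi,\psi} = 1$ if $\chi = \psi$ and $0$ otherwise. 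In particular the elements of $\Irr(G)$ are linearly independent.

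For completeness I would show that the orthogonal complement of $\operatorname{span}\Irr(G)$ inside $\cf(G)$ is trivial. Suppose $f \in \cf(G)$ satisfies $\inner{f,\chi} = 0$ for every $\chi \in \Irr(G)$, and for an irreducible representation $\rho$ affording $\chi$ form $S = \sum_{g \in G}\overline{f(g)}\,\rho(g)$. Because $f$ is constant on conjugacy classes, $S$ commutes with every $\rho(h)$, so Schur's lemma makes $S$ scalar; its trace equals $\sum_{g}\overline{f(g)}\chi(g) = |G|\,\overline{\inner{f,\chi}} = 0$, forcing $S = 0$. Since every complex representation decomposes into irreducibles, the same vanishing holds for the regular representation; applying $\sum_{g}\overline{f(g)}\,\rho_{\mathrm{reg}}(g)$ to the standard basis vector $v_e$ indexed by the identity produces $\sum_{g}\overline{f(g)}\,v_g = 0$, whence $f \equiv 0$. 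Thus $\Irr(G)$ spans $\cf(G)$, and being orthonormal it is a basis.

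I expect the completeness step to be the real obstacle. Orthonormality by itself only delivers linear independence, so the substance of the theorem is that there are enough irreducible characters --- equivalently, that $|\Irr(G)|$ equals $\dim\cf(G)$, the number of conjugacy classes of $G$. The regular-representation argument above is the most economical route, but it leans on the structural input that every complex representation of a finite group is a direct sum of irreducibles (Maschke's theorem), which I would want established before running the argument.
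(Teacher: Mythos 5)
The paper does not prove this statement at all: it is quoted as a known result, with a pointer to Isaacs (Theorem 2.8 and Corollary 2.14), so there is no in-paper argument to compare against. Your proposal is the standard textbook proof of exactly that cited result and it is correct: the averaging map $T^0=\frac{1}{|G|}\sum_g \sigma(g)T\rho(g)\inv$ plus Schur's lemma gives the orthogonality relations (hence orthonormality and linear independence), and the regular-representation argument applied to a class function orthogonal to all of $\Irr(G)$ gives completeness. You are right that the completeness half carries the real content and that it presupposes Maschke's theorem (complete reducibility) so that the vanishing of $\sum_g\overline{f(g)}\rho(g)$ on irreducibles transfers to the regular representation; with that input stated, the argument closes cleanly, and the ``in particular'' clause is, as you say, pure linear algebra. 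In short: correct, but it is a reconstruction of the reference's proof rather than an alternative to anything in the paper.
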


\begin{remark} \label{abelian-rem}
Two points should be noted when $G$ is abelian.  First, the conjugacy classes of $G$ are singletons, and so \emph{all} functions are class functions.  In this case $\Irr(G)$ is an orthonormal basis of $\C^G$, the set of all complex-valued functions on $G$.  We made use of this in Section~\ref{intro-sec}.

Second, $\Irr(G)$ is also a group under pointwise multiplication, and is isomorphic to $G$ itself.
In particular, we note the following:\begin{itemize}
  \item The irreducible characters may be indexed one-to-one by group elements.  This indexing is given explicitly in equation (\ref{cyclic_char-eq}) for the cyclic group $\Z_s$, and by (\ref{char_prod-eq}) when $G$ is a direct product of cyclic groups.  The same holds when $G$ is abelian.\footnote{Because of the Fundamental Theorem of Abelian Groups.}
  \item $\chi_u$ is principal iff $u$ is the identity of $G$.
\end{itemize}
We will use these facts in Lemma~\ref{weight-lemma}.
\end{remark}


\section{Strength}  \label{strength-sec}
If a design $D$ on the set $\cart{G}{1}{k}$ is displayed as columns of a $k \times N$ matrix, the \emph{projection} of $D$ on factors $i_1 < \cdots < i_m$ is the sub-matrix consisting of rows $i_1, \ldots, i_m$.  The resulting design $D'$ is a multisubset of $H = \cart{G}{i_1}{i_m}$, with counting function
  \begin{equation} \label{counting-eq} O'(\by) = \sum_{p(\bx) = \by} O(\bx),
  \end{equation}
where $p$ is the projection of $G$ on $H$ (namely, $p(x_1, \ldots, x_k) = (x_{i_1}, \ldots, x_{i_m}$)).
\begin{definition} \label{strength-def}  $D$ has \emph{strength $t\ge 1$}
if the projection of $D$ onto any $t$ factors has constant counting
function.
\end{definition}
In other words, for every $I=\{i_1,\ldots, i_t\}\subset \set{k}$, the projection $D'_I$ of $D$ on the
factors $i_1, \ldots, i_t$ consists of $\lambda_I$ copies of the full factorial $\cart{G}{i_1}{i_t}$, so that the counting function of $D'_I$ is the constant function $O'_I \equiv \lambda_I$.

We note that if $D$ has strength $t$ then it also has strength $t'$ for all $t' < t$.

When $G$ is a \emph{group}, the map $p$ projecting $G$ onto $H = \cart{G}{{i_1}}{{i_m}}$ is a group homomorphism.  Any such group $H$ has its own set of irreducible characters, of course.  Rephrasing
Definition~\ref{strength-def}, we see that \emph{$D$ has strength
$t$ iff whenever we project $G$ onto a factorial subgroup $H$ with at most $t$
factors, $O'$ is simply a multiple of the principal character of
$H$}. 

\section{Restatement of the theorem} \label{restatement-sec}
For a design $D$ based on the group $G = \cart{G}{1}{k}$ with counting function $O$, the $J$-characteristics (\ref{J-char-eq}) of $D$ are now given by
  \begin{equation} \label{J-char-nonabel-eq}
  \chi(D) =  \sum_{\bx \in G} O(\bx) \overline{\chi(\bx)}
  \end{equation}
for each $\chi \in \Irr(G)$.

As we have noted, when the group $G_i$ is abelian (and cyclic in particular), its irreducible characters may be indexed by $G_i$.  Without this, the concept of the weight of an element $\bu = (u_1, \ldots, u_k) \in G$ is no longer relevant, and so we must transfer this concept to the irreducible characters of $G$.

\begin{definition} \label{wt-def} For $\chi \in \Irr(G)$, let $K$ be the largest factorial subgroup contained in $\ker(\chi)$.  We define the \emph{base} of $\chi$ to be the factorial complement of $K$ in $G$, and the \emph{weight} of $\chi$ by
  \[ \wt(\chi) = \rank(\base(\chi)).\]
\end{definition}
Note that $\wt(\chi) = 0$ iff $\chi$ is the principal character of $G$.

If $K$ is as in Definition~\ref{wt-def}, then $\chi \equiv \chi(e)$ on $K$.  Now $e \in \base(\chi)$, so if $\chi \equiv 1$ on its base, then $\chi(e) = 1$, and so $\chi \equiv 1$. In other words, \emph{if $\chi$ restricted to its base is principal, then $\chi$ itself is principal}, and conversely.  (The converse is trivial.)

The following lemma relates Definition~\ref{wt-def} to the Hamming weight of elements $\bu \in G$ in the abelian case.  It makes use of the facts mentioned in Remark~\ref{abelian-rem}.
\begin{lemma} \label{weight-lemma}
Let $G = \cart{G}{1}{k}$ where $G_i$ is abelian for every $i$.  Fix an isomorphism indexing the irreducible characters by the elements of $G$.  Then $\wt(\chi_{\bu}) = \wt(\bu)$.
\end{lemma}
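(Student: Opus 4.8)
The plan is to compute $\base(\chi_{\bu})$ explicitly by deciding, among all factorial subgroups, exactly which ones lie inside $\ker(\chi_{\bu})$. Write $S = \{i : u_i \neq e_i\}$ for the support of $\bu$, so that $\wt(\bu) = |S|$ by the definition of Hamming weight. I would first record the effect of $\chi_{\bu}$ on the embedded coordinate subgroup $G_i \le G$. Since every irreducible character of an abelian group has degree $1$, we have $\chi_{u_j}(e_j) = 1$ for each $j$; hence for an element of $G_i$ (with identity in all other coordinates) the product (\ref{char_prod-eq}) collapses, and $\chi_{\bu}$ restricts to $\chi_{u_i}$ on $G_i$. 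By Remark~\ref{abelian-rem}, $\chi_{u_i}$ is principal exactly when $u_i$ is the identity of $G_i$, so $G_i \subseteq \ker(\chi_{\bu})$ iff $i \notin S$.

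Next I would pass from single factors to arbitrary factorial subgroups. A factorial subgroup $\prod_{i \in I} G_i$ is generated by its factors $G_i$ ($i \in I$), and on a product of coordinate elements $\chi_{\bu}$ factors as $\prod_{i \in I}\chi_{u_i}(x_i)$. Thus $\prod_{i \in I} G_i \subseteq \ker(\chi_{\bu})$ iff every $\chi_{u_i}$ with $i \in I$ is principal, i.e.\ iff $I \cap S = \emptyset$. For the nontrivial (forward) direction, a single nonprincipal $\chi_{u_{i_0}}$ produces, via an element supported at $i_0$, a value of $\chi_{\bu}$ different from $\chi_{\bu}(e)$. Because the index sets $I$ satisfying $I \cap S = \emptyset$ are closed under union, there is a unique largest factorial subgroup contained in the kernel, namely $K = \prod_{i \notin S} G_i$.

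Finally, by Definition~\ref{wt-def}, $\base(\chi_{\bu})$ is the factorial complement of $K$, which is $\prod_{i \in S} G_i$, of rank $|S|$. Hence $\wt(\chi_{\bu}) = \rank(\base(\chi_{\bu})) = |S| = \wt(\bu)$, as claimed.

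I expect the only delicate point to be the step isolating the \emph{largest} factorial subgroup in the kernel: one must verify both that the factorwise criterion is necessary and sufficient (so that membership reduces to the combinatorics of the support $S$) and that a genuine maximum exists among factorial subgroups. The latter is immediate once factorial subgroups are identified with subsets of $\set{k}$, since the admissible subsets are closed under union. Everything else follows directly from the degree-$1$ property of abelian irreducible characters and the indexing convention of Remark~\ref{abelian-rem}.
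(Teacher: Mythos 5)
Your proposal is correct and follows essentially the same route as the paper: identify the support $S$ of $\bu$, show $\chi_{\bu}$ restricts to $\chi_{u_i}$ on each coordinate subgroup, verify that $K=\prod_{i\notin S}G_i$ lies in $\ker(\chi_{\bu})$, and rule out any factorial subgroup meeting $S$ by exhibiting an element supported at a single index where $\chi_{u_i}$ is nonprincipal. Your added observation that the admissible index sets are closed under union (so a unique largest factorial subgroup in the kernel exists) is a small tidiness bonus over the paper's phrasing, but the substance is identical.
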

\begin{proof}  Given $\bu = (u_1, \ldots, u_k)$, let $I = \{i: u_i \neq e_i\}$, where $e_i$ is the identity of $G_i$.  For $i \notin I$, $\chi_{u_i} = \chi_{e_i} \equiv 1$, so
  \[ \chi_{\bu} = \prod_{i=1}^k \chi_{u_i} = \prod_{i \in I}\chi_{u_i}. \]
Let $K = \prod_{i \notin I} G_i$.  We claim that $K$ is the largest factorial subgroup of $G$ contained in $\ker(\chi_{\bu})$.  If so, then $\base(\chi_{\bu}) = \prod_{i \in I} G_i$, and so
  \[ \wt(\chi_{\bu}) = |I| = \wt(\bu).\]
(This still holds if $I = \emptyset$.)

To prove our claim, note that if $\bx \in K$ then $x_i = e_i$ for all $i \in I$, from which we have
  \[ \chi_{\bu}(\bx) = \prod_{i \in I} \chi_{u_i}(e_i) = 1 = \chi_{\bu}(\be), \]
so that $\bx \in \ker(\chi_{\bu})$.  Thus $K \subset \ker(\chi_{\bu})$.  To show that $K$ is the largest such factorial subgroup, consider $K' = K \times G_j$ for some $j \in I$.  Since $\chi_{u_j} \neq \chi_{e_j}$, we may choose $x_j \in G_j$ such that $\chi_{u_j}(x_j) \neq 1$.  Let
$\bx = (x_1, \ldots, x_k)$ where
  \[ x_i = \left\{   \begin{array}{ll}
                      x_j, & i = j \\
                      e_i, & i \neq j \\
                     \end{array}     \right. . \]
Then $\chi_{\bu}(\bx) = \chi_{u_j}(x_j) \neq 1 = \chi_{\be}(\bx)$, so $\bx \notin \ker(\chi_{\bu})$.  Thus $K'$ is not contained in $\ker(\chi_{\bu})$, which proves our claim.
\end{proof}

We now replace the definition of generalized wordlengths given in (\ref{genwordlength}) by
    \begin{equation} \label{genwordlength-nonabel}
    A_j(D) = N^{-2} \sum_{\wt(\chi) = j} |\chi(D)|^2 \qquad \mbox{for} \; j =
  1, \ldots, k, \end{equation}
where $N = |D|$, defined as in (\ref{|D|-eq}).  With this, we restate our theorem as follows:
\begin{theorem} \label{strength-thm}
Let $D$ be a fractional factorial design on $G = \cart{G}{1}{k}$ with counting
function $O$, and assume $O$ is a class function on $G$. For each $\chi
\in \Irr(G)$ define $\chi(D)$ by (\ref{J-char-nonabel-eq}), 
and let
$\mu_{\chi} = \inner{O,\chi}$. Define $A_j(D)$ by
(\ref{genwordlength-nonabel}), and assume $t \ge 1$. Then the
following are equivalent:
\begin{enumerate}
\item  $D$ has strength $t$.  \label{strength=t}
\item  $A_1(D) = \cdots = A_t(D) = 0$.  \label{Ai=0}
\item  $\mu_{\chi} = 0$ for all $\chi \in \Irr(G)$ with $1 \le \wt(\chi) \le t$. \label{mu=0}
\end{enumerate}

\end{theorem}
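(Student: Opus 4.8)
The plan is to prove the cycle by first disposing of the easy equivalence $(\ref{Ai=0})\Leftrightarrow(\ref{mu=0})$ and then establishing $(\ref{strength=t})\Leftrightarrow(\ref{mu=0})$ by Fourier analysis on the factorial subgroups of $G$. Since $O$ is a class function, Theorem~\ref{orthonormal-thm} gives the expansion $O = \sum_{\chi\in\Irr(G)}\mu_\chi\chi$, and the computation $\mu_\chi = \inner{O,\chi} = |G|^{-1}\chi(D)$ from Section~\ref{intro-sec} yields $A_j(D) = (|G|^2/N^2)\sum_{\wt(\chi)=j}|\mu_\chi|^2$. As this is a sum of nonnegative terms, $A_j(D)=0$ if and only if $\mu_\chi=0$ for every $\chi$ with $\wt(\chi)=j$; running $j$ from $1$ to $t$ gives $(\ref{Ai=0})\Leftrightarrow(\ref{mu=0})$ at once.

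For the substantive equivalence I would fix a factorial subgroup $H = \prod_{i\in I}G_i$ of rank $|I|$, write $H^c = \prod_{i\notin I}G_i$ for its factorial complement, and use the decomposition $G = H\times H^c$. By the product formula for characters quoted in Section~\ref{character-sec}, every $\chi\in\Irr(G)$ factors as $\psi\otimes\phi$ with $\psi\in\Irr(H)$, $\phi\in\Irr(H^c)$, and substituting the expansion of $O$ into the projection formula (\ref{counting-eq}) gives
\[ O'(h) = \sum_{\psi,\phi}\mu_{\psi\otimes\phi}\,\psi(h)\sum_{h'\in H^c}\phi(h'). \]
Orthogonality of the irreducible characters of $H^c$ against the principal character makes the inner sum equal to $|H^c|$ when $\phi$ is principal and $0$ otherwise, so $O'$ is a combination of the irreducible characters of $H$ in which the coefficient of $\psi$ is $|H^c|\mu_{\psi\otimes\mathbf 1_{H^c}}$. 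In particular $O'\in\cf(H)$, so by the uniqueness in Theorem~\ref{orthonormal-thm}, $O'$ is a multiple of the principal character of $H$ if and only if $\mu_{\psi\otimes\mathbf 1_{H^c}}=0$ for every non-principal $\psi\in\Irr(H)$. Combined with the reformulation of strength at the end of Section~\ref{strength-sec}, this says that $D$ has strength $t$ if and only if $\mu_{\psi\otimes\mathbf 1_{H^c}}=0$ for all factorial $H$ of rank at most $t$ and all non-principal $\psi\in\Irr(H)$.

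It remains to match this family of Fourier coefficients with those indexed by $\{\chi:1\le\wt(\chi)\le t\}$. Writing $\chi = \chi_1\otimes\cdots\otimes\chi_k$ and arguing exactly as in the proof of Lemma~\ref{weight-lemma}, I would show that a factorial subgroup $\prod_{i\in S}G_i$ lies in $\ker(\chi)$ precisely when $\chi_i$ is principal for each $i\in S$; hence the largest factorial subgroup of $\ker(\chi)$ is $\prod_{\{i:\chi_i\ \mathrm{principal}\}}G_i$, and $\wt(\chi)=|\{i:\chi_i\ \text{non-principal}\}|$. From this the two index sets coincide: if $\chi=\psi\otimes\mathbf 1_{H^c}$ with $\psi$ non-principal and $\rank(H)\le t$, then $1\le\wt(\chi)\le\rank(H)\le t$, while conversely any $\chi$ with $1\le\wt(\chi)\le t$ equals $\psi\otimes\mathbf 1_{H^c}$ for $H=\base(\chi)$ and $\psi$ the (non-principal) restriction of $\chi$ to its base. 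Substituting this identification into the previous paragraph yields $(\ref{strength=t})\Leftrightarrow(\ref{mu=0})$.

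The main obstacle is the weight computation in the nonabelian setting. When the $\chi_i$ may have degree greater than $1$, I cannot simply read off $\ker(\chi)$ from the values of a homomorphism; the care needed is to check that a single factor $G_i$ is killed by $\chi$ exactly when $\chi_i$ is principal (an irreducible character that is constant and equal to its degree must afford the trivial representation, hence be principal), and then that containment of a factorial subgroup in $\ker(\chi)$ reduces to the one-factor case. The class-function hypothesis on $O$ enters only to guarantee the expansions of $O$ and $O'$ via Theorem~\ref{orthonormal-thm}; everything else is forced by the product structure $G=H\times H^c$ and orthogonality.
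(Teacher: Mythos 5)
Your proposal is correct, but it reaches the key identity by a genuinely different route than the paper. The paper treats the projection target $H$ as a quotient $G/K$: it invokes Lemma~\ref{quotient-lemma} to pass characters between $G$ and $G/K$, proves Lemma~\ref{f'-lemma} ($\inner{f',\chihat}=|K|\inner{f,\chi}$, plus the fact that $f'$ is a class function) by grouping the sum over $G$ into cosets of $K$, and then argues entirely with $\ker(\chi)$ and $\base(\chi)$ in the abstract -- it never needs to know that every $\chi\in\Irr(G)$ factors as an outer tensor product, nor the explicit formula $\wt(\chi)=\#\{i:\chi_i\ \text{nonprincipal}\}$. You instead exploit the direct-product structure head-on: you expand $O$ over $\Irr(H)\otimes\Irr(H^c)$, kill the nonprincipal $H^c$-components by orthogonality against the principal character, and read off $O'=|H^c|\sum_\psi\mu_{\psi\otimes\mathbf 1}\psi$ directly. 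This buys you the class-function property of $O'$ for free (it is manifestly a combination of characters of $H$) and makes the Fourier coefficients of $O'$ completely explicit, but it costs you two character-theoretic inputs the paper avoids: the completeness half of Isaacs' Theorem 4.21 (that the $\psi\otimes\phi$ exhaust $\Irr(G)$, needed for your expansion of $O$ to be the full one), and the nonabelian extension of Lemma~\ref{weight-lemma}, i.e.\ that $G_i\subseteq\ker(\chi)$ iff $\chi_i$ is principal. You correctly flag the latter as the delicate point and your argument for it is sound (an irreducible character with $\chi_i\equiv\chi_i(e)$ affords the trivial representation, hence has degree $1$); if you write this up, state and prove that weight formula as a lemma and cite the equality case of $|\chi(g)|\le\chi(e)$, since the paper's Lemma~\ref{weight-lemma} covers only the abelian case.
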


In Section~\ref{examples-sec} we give two nonabelian examples with counting functions that are class functions. 


\section{Proof of the theorem} \label{proof-sec}
As in the abelian case, we have
  \[ \mu_{\chi} = \inner{O, \chi} = \frac{1}{|G|}\sum_{\bx \in G} O(\bx)\overline{\chi(\bx)} = \frac{1}{|G|}\chi(D) \]
for each $\chi \in \Irr(G)$, so that the generalized wordlengths (\ref{genwordlength-nonabel}) are given
by
  \[ A_j(D) = N^{-2} \sum_{\wt(\chi) = j} |\chi(D)|^2
  =  \frac{|G|^2}{N^2}\sum_{\wt(\chi)=j} |\mu_{\chi}|^2. \]
Thus we immediately have the equivalence of (\ref{Ai=0}) and (\ref{mu=0}) in Theorem~\ref{strength-thm}. Our goal is to prove the equivalence of (\ref{strength=t}) and (\ref{mu=0}).

We noted in Section~\ref{strength-sec} that $D$ has strength $t$ iff whenever we project $G$ onto a factorial subgroup $H$ of rank at most $t$,  the counting function $O'$ of the projected design is a simply a multiple of the principal character of $H$.  Assuming that $O'$ is a class function on $H$, we have the orthonormal expansion
  \begin{equation} \label{O'-expansion-eq} O' = \sum_{\chihat \in \Irr(H)} \mu_{\chihat} \chihat \end{equation}
from which we see that $D$ has strength $t$ iff, for the projection on any $H$ with $\rank(H) \le t$, the Fourier coefficients $\mu_{\chihat} = \inner{O',\chihat}$ vanish for all non-principal irreducible characters $\chihat$ of $H$.

On the other hand, when $O$ is a class function on $G$ we have
  \begin{equation} \label{O-expansion-eq} O = \sum_{\chi \in \Irr(G)} \mu_{\chi} \chi. \end{equation}
Thus the proof requires a comparison of equations (\ref{O'-expansion-eq}) and (\ref{O-expansion-eq}).  It rests on the following two lemmas.  In both, $G$ is an arbitrary finite group, and
we denote the coset $Kg$ by \gbar.\footnote{We also use the ``bar" notation to indicate complex conjugates; context will determine which is meant.}

\begin{lemma} \label{quotient-lemma} \cite[Lemma 2.22]{Isaacs76}
Let $K$ be a normal subgroup of $G$.
\begin{enumerate}
\item \label{going-down} If $\chi$ is a character of $G$ and $K\subseteq \ker(\chi)$,
then $\chi$ is constant on cosets of $K$ in $G$ and the function
$\chihat$ on $G/K$ defined by $\chihat(\gbar) = \chi(g)$ is a character
of $G/K$.

\item \label{going-up} If $\chihat$ is a character of $G/K$, then the function
$\chi$ defined by $\chi(g) = \chihat(\gbar)$ is a character of $G$ and $K \subseteq \ker(\chi)$.

\item In both (\ref{going-down}) and (\ref{going-up}), $\chi\in \Irr(G)$
iff $\chihat\in \Irr(G/K)$.
\end{enumerate}
\end{lemma}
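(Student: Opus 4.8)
The plan is to pass from characters to the representations affording them, since the lemma is at heart the correspondence between representations of $G$ that are trivial on $K$ and representations of the quotient $G/K$. Fix a representation $\rho\colon G \to GL(V)$ affording $\chi$, so that $\chi(g) = {\rm tr}\,\rho(g)$ and $\chi(e) = \dim V$ is the degree of $\chi$. The crucial preliminary fact for part (\ref{going-down}) is that the level set $\ker(\chi) = \{g : \chi(g) = \chi(e)\}$ used in the paper coincides with the kernel of $\rho$ as a homomorphism, namely $\{g : \rho(g) = I\}$. This holds because each $\rho(g)$ has finite order and is therefore diagonalizable with roots of unity as eigenvalues; the trace of such a matrix has modulus at most $\dim V$, with equality forcing every eigenvalue to equal $1$, i.e.\ $\rho(g) = I$.

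Granting this, if $K \subseteq \ker(\chi)$ then $\rho(k) = I$ for every $k \in K$, so $\rho(kg) = \rho(g)$ and hence $\chi(kg) = \chi(g)$; thus $\chi$ is constant on each coset $\gbar = Kg$, and $\chihat(\gbar) := \chi(g)$ is well defined. Setting $\hat\rho(\gbar) = \rho(g)$ gives a well-defined homomorphism $G/K \to GL(V)$ whose trace is $\chihat$, so $\chihat$ is a character of $G/K$, proving (\ref{going-down}). Part (\ref{going-up}) is the formal inverse: given a representation $\hat\rho$ of $G/K$ affording $\chihat$, the composite $\rho = \hat\rho \circ \pi$ with the canonical projection $\pi\colon G \to G/K$ is a representation of $G$ affording $\chi(g) = \chihat(\gbar)$, and since $\pi(k) = \bar{e}$ for $k \in K$ we get $\rho(k) = I$, whence $K \subseteq \ker(\chi)$.

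For the irreducibility statement in both (\ref{going-down}) and (\ref{going-up}) I would compare the two inner products. Since $\chi$ is constant on the cosets of $K$ and each coset has $|K|$ elements, summing over $G$ in blocks yields $\inner{\chi,\chi}_G = \inner{\chihat,\chihat}_{G/K}$ directly from (\ref{inner-eq}). A character is irreducible exactly when its squared norm equals $1$, so $\chi$ and $\chihat$ are irreducible together. The only non-formal step in the whole argument, and the one I expect to be the main obstacle, is the identification in part (\ref{going-down}) of the paper's level-set kernel with the representation-theoretic kernel; the eigenvalue argument there is precisely what licenses the factorization of $\rho$ through $G/K$, after which everything else is bookkeeping. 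One could instead prove the irreducibility claim representation-theoretically, noting that $\rho$ and $\hat\rho$ have the same image and hence the same invariant subspaces, so that one is irreducible iff the other is.
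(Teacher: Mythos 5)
The paper does not actually prove this lemma---it is quoted from \cite[Lemma 2.22]{Isaacs76}---and your representation-theoretic argument is correct and is essentially the proof given there: identify the level-set kernel $\ker(\chi)$ with the kernel of an affording representation $\rho$ via the roots-of-unity/trace argument, factor $\rho$ through $G/K$ (and compose with the projection for the converse), and compare norms or images for the irreducibility claim. One small precision is worth adding: equality $|{\rm tr}\,\rho(g)| = \dim V$ by itself only forces all eigenvalues of $\rho(g)$ to coincide with a single root of unity; it is the stronger hypothesis $\chi(g) = \chi(e) = \dim V$, a positive real number, that pins that common eigenvalue to $1$ and hence gives $\rho(g) = I$.
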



\begin{lemma} \label{f'-lemma} Let $K$ be normal in $G$ and let $H = G/K$.  Let $f \in \C^G$ and $\chi \in \Irr(G)$. Define $f' \in \C^H$ by
  \begin{equation} \label{f'-def-eq} f'(\ybar) = \sum_{x \in \ybar} f(x), \end{equation}
and define $\chihat$ as in Lemma~\ref{quotient-lemma}(\ref{going-down}).
If $f \in \cf(G)$ then $f' \in \cf(H)$, and
 \[ \inner{f',\chihat} = |K|\inner{f,\chi}. \]
\end{lemma}

Note that when $G$ is a group, the counting function $O'$ of a projected design, defined in (\ref{counting-eq}), is of the form (\ref{f'-def-eq}).

\begin{proof}[Proof of Lemma \ref{f'-lemma}]  First, suppose that $\ybar_1$ and $\ybar_2$ are conjugate in $H$.  Then
  \[ \ybar_2 =  (Kh\inv)(Ky_1)(Kh) = h\inv Ky_1 h \]
for some $h \in G$, so the elements of the cosets $\ybar_1$ and $\ybar_2$ may be paired in such a way that each $x_2 \in \ybar_2$ is the conjugate of a unique $x_1 \in \ybar_1$.  Since $f$ is a class function, $f(x_1) = f(x_2)$, so $\sum_{x \in \ybar_1} f(x) = \sum_{x \in \ybar_2} f(x)$.  This shows that $f'$ is a class function on $H$.

By Lemma~\ref{quotient-lemma}(\ref{going-down}), $\chi$ is constant on each
coset \ybar, and $\chihat(\ybar) = \chi(y)$. We then have
\begin{eqnarray*}
\inner{f, \chi} & = & \frac{1}{|G|}\sum_{x\in G} f(x) \overline{\chi(x)} \\
   & = & \frac{1}{|G|}\sum_{\ybar \in H} \sum_{x\in \ybar} f(x) \overline{\chi(x)} \\
   & = & \frac{1}{|G|}\sum_{\ybar \in H} \overline{\chihat(\ybar)} \sum_{x\in \ybar} f(x) \\
   & = & \frac{1}{|G|}\sum_{\ybar \in H} f'(y) \overline{\chihat(\ybar)}
    =  \frac{\inner{f', \chihat}}{|K|}.
\end{eqnarray*}

\end{proof}

We are now ready to complete the proof of Theorem~\ref{strength-thm}.  We begin by noting two things.  First, according to Lemma~\ref{f'-lemma}, the assumption that $O$ is a class function guarantees that the counting function $O'$ of \emph{every} projected design is also a class function.  In particular, $O'$ has an orthonormal expansion (\ref{O'-expansion-eq}).

Second, when $G = H \times K$ is a direct product, $H$ is isomorphic to $G/K$, and the character \chihat\ in Lemma~\ref{quotient-lemma} is the restriction of $\chi$ to $H$.  Recall that $\chi$ is nonprincipal iff its restriction to its base is nonprincipal.

(\ref{strength=t}) $\Rightarrow$ (\ref{mu=0}):  Assume that $D$ has strength $t$, and let $\chi \in \Irr(G)$ with $1 \le \wt(\chi) \le t$.  We need to show that the Fourier coefficient $\mu_{\chi}$ of $O$ vanishes.  Let $H = \base(\chi)$, and let \chihat\ be defined by $\chi$ as in Lemma~\ref{quotient-lemma}(\ref{going-down}) where $K$ is the complement of $H$ in $G$. Now $\chi$ is nonprincipal, as $\wt(\chi) \ge 1$, so \chihat\ is as well.  On the other hand, since $H$ has at most $t$ factors and $D$ has strength $t$, $O'$ is a multiple of the principal character of $H$. But then $\mu_{\chihat} = \inner{O',\chihat} = 0$, and so by Lemma~\ref{f'-lemma} $\mu_{\chi} = \inner{O,\chi} = 0$.

(\ref{mu=0}) $\Rightarrow$ (\ref{strength=t}):  Assuming the
condition on the coefficients $\mu_{\chi}$ given by (\ref{mu=0}), we
must show that $D$ has strength $t$.  To this end, consider any
factorial subgroup $H$ of $G$ having at most $t$ factors, let K be its factorial
complement, and let $O'$ be the counting function of the design projected on $H$.  Let $\chihat
\in \Irr(H)$ be a nonprincipal character on $H = G/K$, and let $\chi \in \Irr(G)$ correspond to it via
Lemma~\ref{quotient-lemma}(\ref{going-up}).  In particular, $\chi$ is nonprincipal and $K \subseteq \ker(\chi)$. Let $K_1$ be the largest factorial subgroup contained in $\ker(\chi)$, so that $K_1 \supseteq K$.  Taking complements, we have $\base(\chi) \subseteq H$, so that
  \[ \wt(\chi) = \rank(\base(\chi)) \le \rank(H) \le t. \]
Therefore, $\inner{O,\chi} = \mu_{\chi} = 0$ by assumption.  But then
$\inner{O',\chihat} = 0$ as well, by Lemma~\ref{f'-lemma}, so $O'$ must be a multiple of the principal character of $H$.  Since this holds for all such $H$, $D$ has strength $t$.

\section{Two examples} \label{examples-sec}
We conclude by giving two examples of designs whose treatment combinations are indexed by nonabelian groups and whose counting functions are class functions of those groups.  Both examples make use of $S_3$, the symmetric group on 3 letters.  We write
  \[ S_3 = \{ e, a, b, c, x, y \} \]
where $e$ is the identity, $a, b$ and $c$ are transpositions, and $x$ and $y$ are 3-cycles.  As is well known, the conjugacy classes of $S_3$ are $\{e\}, \{a,b,c\}$ and $\{x,y\}$.

We also make use of the facts that the conjugacy classes of an abelian group are the singleton subsets, and that in a direct product, $(x_1, \ldots, x_k)$ and $(y_1, \ldots, y_k)$ are conjugate iff $x_i$ and $y_i$ are conjugate for each $i$.

\begin{example} \textbf{A 1/2-fraction of $6 \times 2 \times 2$ experiment of strength 2.}

We index the treatment combinations by $G = S_3 \times \Z_2 \times \Z_2$.  The following array displays the runs as columns, the vertical lines separating conjugacy classes.

\[ D = \left[ \begin{array}{c|cc|c|cc|ccc|ccc}
e & x & y & e & x & y & a & b & c & a & b & c \\
1 & 1 & 1 & 0 & 0 & 0 & 0 & 0 & 0 & 1 & 1 & 1 \\
0 & 0 & 0 & 1 & 1 & 1 & 0 & 0 & 0 & 1 & 1 & 1
\end{array} \right] \]

This makes use of 6 of the 12 conjugacy classes of $G$.  The other 6 classes would furnish another example.  Since $S_3$ is the smallest nonabelian group, this is the smallest non-trivial fractional factorial design of strength 2 that can be indexed by a nonabelian group.
\end{example}

\begin{example}  \textbf{A 1/2-fraction of a $6 \times 2 \times 2 \times 2$ experiment of strength 3.}
\[ D = \left[ \begin{array}{c|c|c|c|cc|cc|cc|cc|ccc|ccc|ccc|ccc}
e & e & e & e & x & y & x & y & x & y & x & y & a & b & c & a & b & c & a & b & c & a & b & c  \\
0 & 0 & 1 & 1 & 0 & 0 & 0 & 0 & 1 & 1 & 1 & 1 & 0 & 0 & 0 & 0 & 0 & 0 & 1 & 1 & 1 & 1 & 1 & 1   \\
0 & 1 & 0 & 1 & 0 & 0 & 1 & 1 & 0 & 0 & 1 & 1 & 0 & 0 & 0 & 1 & 1 & 1 & 0 & 0 & 0 & 1 & 1 & 1   \\
0 & 1 & 1 & 0 & 0 & 0 & 1 & 1 & 1 & 1 & 0 & 0 & 1 & 1 & 1 & 0 & 0 & 0 & 0 & 0 & 0 & 1 & 1 & 1
\end{array} \right] \]
We have indexed the treatment combinations by $G = S_3 \otimes \Z_2 \otimes \Z_2 \otimes \Z_2$.
Note that the last three rows consist of three copies of the full $2^3$ factorial design, split into its two regular $2^{3-1}$ fractions given by the solutions $(X,Y,Z)$ of $X+Y+Z = 0$ and = 1 modulo 2. We have attached the first fraction to $e, x,$ and $y$, and the second fraction to $a, b$ and $c$.
\end{example}

Further examples are given in \cite{Maggiethesis12} 

\bibliography{nonsimple}
\bibliographystyle{plain}

\end{document}